\newtheorem{Theorem}{Theorem}[section]
\newtheorem{Lemma}[Theorem]{Lemma}
\newtheorem{Definition}[Theorem]{Definition}
\newtheorem{example}[Theorem]{Example}
\newcommand{\N}{\mathbb N}
\title[Domain of implicit functions]{On the domain of implicit functions in a projective limit setting without additionnal norm estimates}
\author{Jean-Pierre Magnot}
\address{LAREMA, Universit\'e d’Angers, 2 Bd Lavoisier
	, 49045 Angers cedex 1, France and Lyc\'ee Jeanne d'Arc, 40 avenue de Grande Bretagne, 63000 Clermont-Ferrand, France}
\email{jean-pierr.magnot@ac-clermont.fr}
\begin{document}

\maketitle

\begin{abstract}
We examine how implicit functions on ILB-Fr\'echet spaces can be obtained without metric or norm estimates which  are classically assumed. We obtain implicit functions defined on a domain $D$ which is not necessarily open, but which contains the unit open ball of a Banach space. The corresponding inverse functions theorem is obtained, and we finish with an open question on the adequate (generalized) notion of differentiation, needed for the corresponding version of the Fr\"obenius theorem.
\end{abstract}

\vskip 12pt
\textit{Keywords:implicit functions; ILB spaces} 

\textit{MSC (2010): 58C15}
\section*{Introduction}
Classical inverse functions theorems, implicit functions theorems and Fr\"obenius theorems on Banach spaces are known to be equivalent. There exists numerous extensions to setting on Fr\'echet or locally convex spaces, and to our knowledge almost all proofs are based on a contraction principle.  In order to obtain in the proofs a mapping which is contracting, one needs to assume conditions which are not automatically fulfilled by a mapping on Fr\'echet spaces, but which are automatically locally fulfilled (on an open set) by a sufficiently regular mapping on Banach spaces.For classical statements, one can see \cite{Dieu,Gl2006,Ham,HN1971,KP2002,Om,Pen}.

We analyze here how a very classical proof of the implicit function theorem can be adapted on a ILB setting, more precisely on a slightly more general framefork, that is when Fr\'echet spaces $E_\infty$ considered are projective limits of a sequence of Banach spaces $(E_i)$ without assumption of density for teh inclusion $E_{i+1}\subset E_i,$ and when the functions $f_\infty$ on Fr\'echet spaces are restrictions of bounded functions $f_i$ on the sequence of Banach space. This is what one may call order 0 maps, by analogy with the order of differential operators. Then we get (Theorem \ref{implicitdeg0}) an implicit function which is defined on a domain $D,$ which is not a priori open in the Fr\'echet topology, but which contains the open ball of a Banach space. This results can be adapted to some functions $f_\infty$ for which there does not exist any extension to a Banach space. These functions have to be controlled by a family of injective maps $\{\Phi_x\}$, which explains the terminology "tame" (Theorem \ref{phi-implicit}). As a special case of applications, we recover the maps $f_\infty$ which extend to maps $E_i \rightarrow E_{i-r}$ (called order $r$ maps)     

We have to remark that the domain $D$ can be very small.
This is the reason why regularity results on implicit functions cannot be stated: differentiability, in a classical sense, requires open domains or at least manifolds. This leads to natural questions for the adequate setting for analysis beyond the Banach setting. Even if not open, following the same motivations as the ones of  Kriegl and Michor in \cite{KM} when they consider smoothness on non open domains, the domain $D$ may inherit some kind of generalized setting for differential calculus, such as diffeologies \cite{Igdiff} which are used in \cite{Ma2016-3}. This question is left open, because out of the scope of this work: the most adapted (generalized) framework for the extension of the regularity (i.e. differentiability) has to be determined.

We then give consequences for an inverse functions theorem, which can be stated with the same restrictions as before on the nature of teh domain $D$, and with an obstruction to follow the classical proof of the Fr\"obenius theorem from \cite{Pen} where differentiation on $D$ is explicitely needed. Finally, in last section, we show how this theorem can describe a Banach Lie gbgroup of a topological group arising in the ILH setting. 
\section{Implicit functions from Banach spaces to projective limits}\label{impl}

Let $(E, ||.||_E) $ anf $(F, ||.||_F)$ be two Banach spaces. The Banach space $E \times F$ is endowed wthh ine norm $||(x,y)||_{E\times F} = \max\{||x||_E,||y||_F\}.$ We note by $D_1$ and $D_2$ the (Fr\'echet) differential with respect to the variables in $E$ and $F$ respectively. Let us first give the statement and a proof of a classical implicit function theorem on Banach spaces, for the sake of extracting key features for generalization. 
For this, let $U$ be an open neighborhood of $O$ in $E,$ let $V$ be an open neighborhood of $0$ in $F,$ and let \begin{eqnarray} \label{fbanach}
f : U \times V & \rightarrow& F
\end{eqnarray}
be a $C^r$-function $(r\geq 1)$ in the Fr\'echet sense, such that
\begin{eqnarray}\label{0banach}
f(0,0)&=&0
\end{eqnarray}
and \begin{eqnarray} \label{D2banach} D_2f(0;0) & = & Id_F. \end{eqnarray}

\begin{Theorem} \label{implicitBanach}
	There exists a constant $c>0$ such that, on the open ball $B(0,c) \subset E,$ there is an unique map $$u : B(0,c) \rightarrow V$$ such that \begin{equation}
	\forall x \in B(0,c), \quad f(x,u(x)) = 0.
	\end{equation}  
\end{Theorem}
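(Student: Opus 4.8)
The plan is to turn the equation $f(x,y)=0$ into a parametrized fixed-point problem and solve it by the Banach contraction principle, with $x\in E$ playing the role of a parameter. First I would introduce the auxiliary map $\Phi:U\times V\to F$, $\Phi(x,y)=y-f(x,y)$, so that for fixed $x$ the equation $f(x,y)=0$ is equivalent to $\Phi(x,y)=y$, i.e. to $y$ being a fixed point of the partial map $\Phi(x,\cdot)$. Differentiating in the second variable gives $D_2\Phi(x,y)=Id_F-D_2f(x,y)$, so by hypothesis \eqref{D2banach} we have $D_2\Phi(0,0)=Id_F-Id_F=0$.

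Since $f$ is $C^r$ with $r\geq 1$, the map $D_2 f$ is continuous, hence so is $D_2\Phi$; as it vanishes at $(0,0)$, there exist radii $a,b>0$ with $\overline{B}(0,b)\subset V$ such that $\norm{D_2\Phi(x,y)}\leq\tfrac12$ whenever $\norm{x}_E\leq a$ and $\norm{y}_F\leq b$. The mean value inequality, applied in the convex ball $\overline{B}(0,b)$, then shows that $\Phi(x,\cdot)$ is $\tfrac12$-Lipschitz for every such $x$, that is, a contraction with a constant that does not depend on $x$.

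Next I would secure the self-mapping property. Because $\Phi(0,0)=-f(0,0)=0$ by \eqref{0banach} and $\Phi$ is continuous, after shrinking $a$ if necessary we may assume $\norm{\Phi(x,0)}_F\leq\tfrac{b}{2}$ for all $\norm{x}_E\leq a$. Then for $\norm{y}_F\leq b$,
\[
\norm{\Phi(x,y)}_F\leq\norm{\Phi(x,y)-\Phi(x,0)}_F+\norm{\Phi(x,0)}_F\leq\tfrac12\norm{y}_F+\tfrac{b}{2}\leq b,
\]
so $\Phi(x,\cdot)$ maps the complete metric space $\overline{B}(0,b)$ into itself. The Banach fixed-point theorem provides, for each $x$ with $\norm{x}_E\leq a$, a unique $u(x)\in\overline{B}(0,b)$ with $\Phi(x,u(x))=u(x)$, equivalently $f(x,u(x))=0$. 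Taking $c=a$ (or any smaller radius) defines $u:B(0,c)\to V$ with the required property, and the uniqueness of the fixed point in $\overline{B}(0,b)$ yields uniqueness of $u$ (after, if needed, restricting $V$ to this ball, which is the natural local formulation).

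The step I expect to be decisive---and the one the subsequent sections are designed to bypass---is the pair of norm estimates $\norm{D_2\Phi}\leq\tfrac12$ and the self-mapping bound: both rest on the Banach norm of $F$ and on the mean value inequality. In a projective limit of Banach spaces no single norm controls the whole scale $(E_i)$ at once, so producing a uniform contraction constant is precisely where this classical argument fails. Isolating this estimate as the sole genuinely analytic ingredient is, I believe, the reason for presenting the proof in exactly this form.
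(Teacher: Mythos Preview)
Your argument is correct and is essentially the paper's own proof: the paper defines $g(x,y)=y-f(x,y)$ (your $\Phi$), obtains the same contraction bound $\|D_2f-Id_F\|<\tfrac12$ and the same smallness of $\|f(x,0)\|$, and then runs the Picard iteration $u_{n+1}=u_n-f(x,u_n)$ explicitly rather than quoting the Banach fixed-point theorem. The only cosmetic difference is that the paper names the successive constants $c_0,c_1,c_2$ and takes $c=\min\{c_0,c_1,c_2,1\}$, because these constants are reused verbatim in the projective-limit version (Theorem~\ref{implicitdeg0}) to build the norm $\|\cdot\|_{f_\infty}$.
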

Let us remark that regularity if the function $u$ is ignored for it is not necessary for next developments.
We now divide the main arguments of the classical direct proof of this theorem into three lemmas.

\begin{Lemma}
	There exists $c_0>0$ and $K>0$ such that $$||(x,y)||_{E\times F} < c_0 \Rightarrow ||D_1f(x,y)||_{L(E,F)} <K.$$
\end{Lemma}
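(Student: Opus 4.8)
The plan is to read the conclusion as a mere local boundedness statement for the continuous map $D_1 f$. Since $f$ is $C^r$ with $r \geq 1$ in the Fr\'echet sense, the first partial differential $D_1 f : U \times V \to L(E,F)$ exists and is continuous, where $L(E,F)$ carries its operator norm. The whole lemma then reduces to the elementary fact that a continuous map into a normed space is locally bounded near any point of its domain, applied here at the origin.

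First I would record that $D_1 f(0,0)$ is a genuine bounded linear operator, so that $M := \norm{D_1 f(0,0)}_{L(E,F)}$ is a finite number. Next I would invoke continuity of $D_1 f$ at $(0,0)$: taking $\eps = 1$, there is an open neighborhood $W$ of $(0,0)$ inside $U \times V$ such that $\norm{D_1 f(x,y) - D_1 f(0,0)}_{L(E,F)} < 1$ for all $(x,y) \in W$. The triangle inequality then gives $\norm{D_1 f(x,y)}_{L(E,F)} < M+1$ on $W$.

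Finally, because $W$ is a neighborhood of the origin for the topology induced by $\norm{(x,y)}_{E \times F} = \max\{\norm{x}_E, \norm{y}_F\}$, I can pick $c_0 > 0$ so small that the open ball of radius $c_0$ centered at $(0,0)$ is contained in $W$. Setting $K = M+1$ then yields exactly the implication $\norm{(x,y)}_{E \times F} < c_0 \Rightarrow \norm{D_1 f(x,y)}_{L(E,F)} < K$.

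There is no serious obstacle here, since the statement is a direct consequence of the hypothesis $r \geq 1$. The only points requiring a little care are that the relevant continuity is continuity of $D_1 f$ in the operator norm of $L(E,F)$, which is precisely what Fr\'echet $C^1$-regularity delivers, and that the abstract neighborhood $W$ must be converted into a concrete ball for the max norm $\norm{\cdot}_{E \times F}$, which is legitimate because that norm generates the product topology on $E \times F$.
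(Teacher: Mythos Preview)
Your argument is correct and is exactly the approach of the paper, which simply records in one sentence that $f$ being $C^1$ makes $D_1f$ continuous and hence bounded on a neighborhood of $(0,0)$. You have merely unpacked that sentence into the explicit $\eps=1$ continuity estimate and the passage from an abstract neighborhood to a max-norm ball.
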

\begin{proof}
	Since $f$ is $C^1,$ there exists a neighborhood of $(0,0) \in E\times F$ such that $||D_1f(x,y)||_{L(E,F)}$ is bounded.
	\end{proof}
\begin{Lemma} \label{lemma1banach}
	
	There exists $c_1>0$ such that \begin{eqnarray}
	||(x,y)||_{E \times F}<c_1 & \Rightarrow & ||D_2f(x,y) - Id_F||_{L(F)} < \frac{1}{2}.
	\end{eqnarray} 

\end{Lemma}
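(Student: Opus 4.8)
The plan is to read the conclusion as a statement of continuity of the partial differential $D_2f$ at the origin. Since $f$ is $C^r$ with $r\geq 1$ in the Fr\'echet sense, the map $(x,y)\mapsto D_2f(x,y)$ is, by definition, a continuous map from the open set $U\times V$ into the Banach space $L(F)$ equipped with its operator norm. This continuity is the only ingredient I would use.

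First I would rewrite the quantity to be controlled by invoking the normalisation hypothesis \eqref{D2banach}: because $D_2f(0,0)=Id_F$, one has $D_2f(x,y)-Id_F = D_2f(x,y)-D_2f(0,0)$ for every $(x,y)\in U\times V$. The desired implication is then nothing but the assertion that this difference is small in $L(F)$ when $(x,y)$ is close to $(0,0)$. Applying the $\eps$-definition of continuity of $D_2f$ at $(0,0)$ with $\eps=\frac{1}{2}$ produces a radius $c_1>0$ such that $||(x,y)||_{E\times F}<c_1$ forces $||D_2f(x,y)-D_2f(0,0)||_{L(F)}<\frac{1}{2}$, which is exactly the claim.

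There is no genuine obstacle here: the operator-norm continuity of $D_2f$ is built into the hypothesis that $f$ is $C^1$, so no auxiliary estimate has to be established, just as the preceding lemma only needed local boundedness of $D_1f$. It is worth stressing, however, that this step is precisely where the argument relies on the Banach structure. The automatic passage from differentiability to a local operator-norm bound is exactly the kind of control that, as the introduction emphasises, is no longer freely available once $F$ is replaced by a Fr\'echet space that is a projective limit of Banach spaces; this is why the later sections must replace a lemma of this type by a hypothesis imposed directly on the maps.
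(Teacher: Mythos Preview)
Your proof is correct and follows essentially the same route as the paper: both argue that $D_2f$ is continuous from $U\times V$ into $L(F)$ (the paper phrases this as $D_2f$ being of class $C^{r-1}$, hence $C^0$), use the normalisation $D_2f(0,0)=Id_F$ to rewrite the difference, and then apply the $\eps$-definition of continuity at the origin with $\eps=\tfrac12$. Your closing remark on the role of the Banach structure is apt and in the spirit of the paper's motivation, though the paper does not make this comment at this point.
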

\begin{proof}
	The map $f$ is of class $C^r$, with $r \geq 1,$ so that, the map $D_2f:(x,y) \in U\times V \mapsto D_2f(x,y)(.) \in L(F)$ is of class $C^{r-1},$ and in particular of class $C^0.$ By the way, $$\exists c_1>0 , \, ||(x,y)||_{E \times F}<c_1  \Rightarrow ||D_2f(x,y) - D_2f(0;0)||_{L(F)} < \frac{1}{2}.$$ 
	\end{proof}
\begin{Lemma} \label{lemma2banach}
	Let $c_1$ be teh constant of Lemma \ref{lemma1banach}. There exists $c_2>0$ such that
	\begin{eqnarray}
	||x||_{E}<c_2 & \Rightarrow & ||f(x,0) ||_F < \frac{c_1}{4}.
	\end{eqnarray} 
\end{Lemma}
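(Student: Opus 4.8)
The plan is to invoke nothing more than the continuity of $f$ together with the normalization \eqref{0banach}. The key observation is that the single-variable map $g : x \mapsto f(x,0)$, obtained by freezing the second argument at $0 \in V$, is continuous on a neighborhood of $0$ in $E$: indeed $f$ is of class $C^r$ with $r \geq 1$, hence in particular $C^0$ on $U \times V$, and the inclusion $x \mapsto (x,0)$ is continuous (in fact linear and isometric onto its image, since $||(x,0)||_{E\times F} = \max\{||x||_E,||0||_F\} = ||x||_E$ for the chosen norm). So $g$ is continuous at the origin as a composition of continuous maps.

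First I would record that $g(0) = f(0,0) = 0$ by \eqref{0banach}. Since $c_1 > 0$ (it is the constant furnished by Lemma \ref{lemma1banach}), the quantity $\frac{c_1}{4}$ is a strictly positive real number, so it is a legitimate target for an $\eps$–$\delta$ argument. Applying continuity of $g$ at the origin with $\eps = \frac{c_1}{4}$ yields a radius $c_2 > 0$ such that $||x||_E < c_2$ forces $||g(x) - g(0)||_F < \frac{c_1}{4}$, that is $||f(x,0)||_F < \frac{c_1}{4}$, which is exactly the claim.

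The only point requiring a moment's care is domain-compatibility: one must ensure that $(x,0)$ actually lies in $U \times V$ before writing $f(x,0)$. This is automatic once $c_2$ is taken small enough that $B(0,c_2) \subset U$, which is possible because $U$ is an open neighborhood of $0$ in $E$ and $0 \in V$. There is no genuine obstacle here. Unlike the two preceding lemmas, which control the operator norms of the partial derivatives $D_1 f$ and $D_2 f$, this statement uses only the value of $f$ at the base point, so the argument reduces to the continuity of $f$ at $(0,0)$ and exploits no differentiability beyond $C^0$.
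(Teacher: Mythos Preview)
Your proof is correct and follows essentially the same approach as the paper: both use that $f$ is $C^r$ hence $C^0$, invoke $f(0,0)=0$, and apply continuity at the origin with tolerance $\frac{c_1}{4}$ to produce $c_2$. Your added remarks on the inclusion $x\mapsto(x,0)$ and on domain compatibility are a slight elaboration but do not change the argument.
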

\begin{proof}
	The map $f$ is of class $C^r$, with $r \geq 1,$ so that it is in particular of class $C^0.$ By the way, there exists a constant $c_2$ such that $$ ||x-0||_E < c_2 \Rightarrow ||f(x,0) - f(0,0)||_F < \frac{c_1}{4}.$$
\end{proof}
\begin{Lemma} \label{lemma3banach}
	Let $c = \min\{c_0,c_1,c_2,1\}.$ Let $x$ such that $$||x||<c.$$ Then the sequence  $(u_n)_\N \in F^\N,$ defined by induction by \begin{equation}\left\{ \begin{array}{l} u_0 = 0 \\ \forall n \in \N, \, u_{n+1} = u_n -f(x,u_n)\end{array} \right. \end{equation} is well-defined and converges to $u(x) \in V.$
\end{Lemma}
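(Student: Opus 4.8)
The plan is to recognize the recursion as a Picard iteration for a contraction. For fixed $x$ with $\norm{x}_E < c$, define $\phi_x : y \mapsto y - f(x,y)$, so that the recursion reads $u_{n+1} = \phi_x(u_n)$ and a fixed point of $\phi_x$ is exactly a zero of $y \mapsto f(x,y)$. First I would check that $\phi_x$ contracts on the open ball $B(0,c_1) \subset F$: its partial differential is $D_2\phi_x(y) = Id_F - D_2 f(x,y)$, whose operator norm is $< \tfrac12$ as soon as $\norm{(x,y)}_{E\times F} < c_1$, by Lemma \ref{lemma1banach}. Since $\norm{x}_E < c \le c_1$, this holds for every $y$ with $\norm{y}_F < c_1$, and the mean value inequality on the convex ball gives
\[
\norm{\phi_x(y) - \phi_x(y')}_F \le \tfrac12 \norm{y - y'}_F, \qquad \norm{y}_F,\ \norm{y'}_F < c_1.
\]

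Next I would control the starting point. Because $u_0 = 0$, we have $u_1 = \phi_x(0) = -f(x,0)$, and Lemma \ref{lemma2banach} gives $\norm{u_1}_F < c_1/4$, using $\norm{x}_E < c \le c_2$. The heart of the argument is then an induction showing the iterates never leave the ball on which the contraction estimate is valid. Assuming $u_0,\dots,u_n \in B(0,c_1)$, the contraction bound yields $\norm{u_{k+1} - u_k}_F \le (\tfrac12)^k \norm{u_1}_F < (\tfrac12)^k c_1/4$ for $k \le n$, whence
\[
\norm{u_{n+1}}_F \le \sum_{k=0}^{n} \norm{u_{k+1}-u_k}_F < \frac{c_1}{4}\sum_{k=0}^{\infty} \Big(\tfrac12\Big)^k = \frac{c_1}{2} < c_1 ,
\]
so $u_{n+1}$ again lies in $B(0,c_1)$ and the recursion is well defined at every step.

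Finally, the same geometric estimate shows that $(u_n)_{\N}$ is Cauchy: for $m>n$ one has $\norm{u_m - u_n}_F \le \sum_{k\ge n} (\tfrac12)^k c_1/4 \to 0$. Since $F$ is a Banach space, $(u_n)$ converges to a limit $u(x)$ with $\norm{u(x)}_F \le c_1/2$; choosing $c_1$ small enough that $B(0,c_1)\subset V$, which is possible as $V$ is an open neighborhood of $0$, guarantees $u(x)\in V$. Passing to the limit in $u_{n+1}=u_n - f(x,u_n)$ using the continuity of $f$ would moreover give $f(x,u(x))=0$, although for the present statement only well-definedness and convergence are required.

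The single delicate point is the inductive stability estimate above. The constant $c_1/4$ in Lemma \ref{lemma2banach} is precisely what forces the cumulative displacement to be $c_1/2$, hence to stay strictly inside the ball $B(0,c_1)$ where the Lipschitz constant $\tfrac12$ is guaranteed; without this buffer the orbit could reach the boundary and the contraction bound could fail at some step. I expect this control of the orbit inside the domain, rather than the completeness argument, to be the main obstacle, and it is exactly the feature whose generalization beyond the Banach setting is at stake in the sequel.
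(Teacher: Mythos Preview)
Your argument is correct and follows essentially the same route as the paper: define $\phi_x(y)=y-f(x,y)$, use Lemma~\ref{lemma1banach} and the mean value inequality to get the $\tfrac12$-Lipschitz bound on $B(0,c_1)$, use Lemma~\ref{lemma2banach} for $\norm{u_1}_F<c_1/4$, and then run the geometric-series induction to keep the iterates inside $B(0,c_1)$ and prove convergence. Your explicit remark that one may shrink $c_1$ so that $B(0,c_1)\subset V$ is a point the paper leaves implicit but is indeed needed to conclude $u(x)\in V$.
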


\begin{proof}
	Let us assume $x$ fixed. Let $g(x,y)= y-f(x,y).$ By the way, $u_n = g^n (u_0).$
	Applying Lemma \ref{lemma1banach}, let $(y,y') \in F^2$ such that both $(x,y)$ and $(x,y)$ are in $B(0,c)\subset B(0,c_1).$ $$f(x,y) -f(x,y')= \int_0^1 D_2f(x,ty+(1-t)y').(y'-y) dt$$
	By the way 
	\begin{eqnarray*}
||g(x,y) -g(x,y') ||_F & \leq & \int_0^1 ||D_2g(x,ty+(1-t)y').(y'-y)||_F dt\\
& \leq &  \int_0^1 ||D_2f(x,ty+(1-t)y')-Id_F||_{L(F)}. ||y'-y||_F dt \\
& \leq & \frac{||y'-y||_F}{2} 
	\end{eqnarray*} 
By the way, $g$ is $\frac{1}{2}-$Lipschitz. 
Thus, applying Lemma \ref{lemma2banach},
we obtain by induction:
 $$||u_1 - u_0||_F < \frac{c_1}{4} \Rightarrow ||u_{n+1}-u_n||_F < \frac{c_1}{2^{n+2}}.$$
and  $$\forall n \in \N,|u_n||_F \leq \frac{c_1(2^{n}-1)}{2^{n+2}}<c_1.$$
\vskip 6pt
Hence $(u_n)$ is converging to $u(x),$ which is in $V.$

	\end{proof}
\begin{proof}[Proof of Theorem \ref{implicitBanach}.]
	By Lemma \ref{lemma3banach} the function $x \mapsto u(x)$ exists for $||x||_E < c.$

	\end{proof}
We now adapt these results to the following setting. 
Let $(E_i)_{i\in \N} $ and $(F_i)_{i \in \N}$ be two decreasing sequences of Banach spaces, i.e. $\forall i \in \N,$ we have $E_{i+1}\subset E_i$ and $F_{i+1}\subset F_i,$ with continuous inclusion maps.
 We then consider $U_0$ and $V_0$ two open neighborhoods of $0$ in $E_0$ and $F_0$ respectively, and a function $f_0,$ of class $C^r$ with the same properties  as in equations (\ref{fbanach},\ref{0banach},\ref{D2banach}). Let us now define, for $i \in \N, $ $U_i = U_0 \cap E_i$ and $V_i = V_0 \cap V_i,$ and let us assume that $f_0$ restricts to $C^r-$maps   \begin{eqnarray} \label{fibanach}
f_i : U_i \times V_i & \rightarrow& F_i.
\end{eqnarray}  
Let $E_\infty = \underleftarrow{\lim}\{E_i; \, i \in \N\},$  let $F_\infty = \underleftarrow{\lim}\{F_i; \, i \in \N\}$ and let $f_\infty = \underleftarrow{\lim}\{F_i; \, i \in \N\}.$

\begin{Theorem} \label{implicitdeg0}
	There exists a non-empty domain $D_\infty\subset U_\infty,$ possibily non-open in $U_\infty,$ and a function $u_\infty:D_\infty \rightarrow V_\infty$ such that, 
	$$\forall x \in D_\infty, \, f_\infty(x,u_\infty(x))=0,$$
	and such that $D_\infty$ contains the unit ball of the Banach space $B_{f_\infty}\subset E_\infty$ defined as the domain of the norm $$||x||_{f_\infty} = \sup_{i \in \N } \left\{\frac{||x||_{E_i}}{c_i}\right\} .$$
\end{Theorem}

\begin{proof}
	Let $i \in \N.$ We now consider a maximal domain $D_i\subset U_i$ where there exists an unique function $u_i$ such that $$\forall x \in D_i, \, f_i(x,u_i(x))=0.$$
	This domain is non empty since it contains $0\in E_i$ and, applying Theorem \ref{implicitBanach}, there exists a constant $c_i>0$ such that $$||x||_{E_i}<c_i \Rightarrow x \in D_i.$$
	By the way, any maximal $D_i$ is an open neighborhood of $0.$
	By the way, setting $D_\infty$ as the intersection of such a family $(D_i),$ we get that $D_\infty $ contains $0 \in E_\infty.$ Of course, $D_\infty$ is not a priori open in th e projective limit topology. However, let $$B_{f_\infty} = \left\{x \in E_\infty \, | \, \sup_{i \in \N } \left\{\frac{||x||_{E_i}}{c_i}\right\} < +\infty\right\}.$$
	This space is a Banach space for the norm $$||.||_{f_\infty} = \sup_{i \in \N } \left\{\frac{||.||_{E_i}}{c_i}\right\}.$$
	Since $$||x||_{f_\infty} < 1 \Leftrightarrow \forall i \in \N, \,||x||_{E_i}<c_i, $$
we get that the open ball of radius 1 in $B_{f_\infty}$ is a subset of $D_\infty,$ which ends the proof.
	\end{proof}
Let us now extend it to a class of functions that we call \textbf{tame}. 
For this, we define the sequences $(E_i)_{i\in \N} $ and $(F_i)_{i \in \N}$ as before, as well as $E_\infty$ and $F_\infty.$ We also define a similar sequence $(G_i)_{i \in \N}$ of Banach spaces and $G_\infty$ the projective limit of this family.
\begin{Definition}
	Let $U_0\times V_0$ be an open neighborhood of $0$ in $E_0\times F_0$ and let $U_\infty = U_0 \cap E_\infty$ and $V_\infty = V_0 \cap F_\infty.$ Let us fix $\{\Phi_x\}_{x\in U_\infty}$ be a family of injective maps from $G_\infty to F_\infty.$ A map $$f: U_\infty \times V_\infty \rightarrow G_\infty$$
	is \textbf{$\Phi$-tame} if and only if $$f_\infty = \Phi_x \circ f : U_\infty \times V_\infty \rightarrow V_\infty $$
	extends to $C^r-$maps ($r\geq 1$) 
	$$f_i : U_i \times V_i \rightarrow V_i $$
\end{Definition}

\begin{example}
	If there exists a linear isomorphism $A: E_0 \rightarrow E_1$ which restricts to isomorphisms $E_i \rightarrow E_{i+1},$ setting $\Phi_x = A^r,$ the family of tame maps are exactly the family of functions $f_\infty$ which extend to $C^r-$maps $E_i\rightarrow E_{i-r}.$
\end{example}
\begin{Theorem}\label{phi-implicit}
	Let $f$ be a $\Phi-$tame map, such that, $\forall i \in \N,$ $$D_2f_i(0,0)= Id_{F_i}.$$
	Then there exists a non-empty domain $D_\infty\subset U_\infty,$ possibily non-open in $U_\infty,$ and a function $u_\infty:D_\infty \rightarrow V_\infty$ such that, 
	$$\forall x \in D_\infty, \, f(x,u_\infty(x))=0,$$
Moreover, there exists a sequence of positive real numbers $(c_i)$ such that $D_\infty$ contains the unit ball of the Banach space $B_{f,\Phi}\subset E_\infty$ defined as the domain of the norm $$||x||_{f,\Phi} = \sup_{i \in \N } \left\{\frac{||x||_{E_i}}{c_i}\right\} .$$ 
\end{Theorem}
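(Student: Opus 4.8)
The plan is to reduce Theorem \ref{phi-implicit} to Theorem \ref{implicitdeg0}, which I would apply not to $f$ itself but to the composite $f_\infty$ defined by $(x,y) \mapsto \Phi_x(f(x,y))$, and then to transport the resulting solution back to $f$ using the injectivity of the family $\{\Phi_x\}$. The point of the definition of $\Phi$-tameness is precisely that it packages the (possibly $x$-dependent) maps $\Phi_x$ into a projective family of $C^r$-extensions $f_i : U_i \times V_i \to F_i$ of $f_\infty$, so that the earlier theorem can be invoked as a black box.

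First I would check that the family $(f_i)_{i \in \N}$ meets the hypotheses of Theorem \ref{implicitdeg0}: each $f_i$ is of class $C^r$ with $r \geq 1$ by $\Phi$-tameness; the standing normalization $f(0,0) = 0$ together with $\Phi_0(0) = 0$ yields $f_i(0,0) = \Phi_0(f(0,0)) = 0$; and $D_2 f_i(0,0) = Id_{F_i}$ is assumed. Applying Theorem \ref{implicitdeg0} to $f_\infty$ then produces constants $c_i > 0$, a domain $D_\infty \subset U_\infty$ (possibly non-open), and a map $u_\infty : D_\infty \to V_\infty$ satisfying $f_\infty(x, u_\infty(x)) = \Phi_x(f(x, u_\infty(x))) = 0$ for every $x \in D_\infty$, with $D_\infty$ containing the open unit ball of the Banach space $B_{f,\Phi}$ associated to $||x||_{f,\Phi} = \sup_{i \in \N} \{||x||_{E_i}/c_i\}$. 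The only verification to add here is that the level-wise solutions assemble into a single $u_\infty$ valued in $F_\infty$: this holds because the fixed-point iteration of Lemma \ref{lemma3banach} producing $u_i(x)$ is literally the same sequence at every level (the $f_i$ restrict one another), so the $u_i(x)$ coincide and define a consistent element of the projective limit. This already gives the quantitative conclusion of the theorem.

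The remaining and genuinely new step is to pass from $\Phi_x(f(x, u_\infty(x))) = 0$ to $f(x, u_\infty(x)) = 0$ in $G_\infty$, and this is where I expect the only real difficulty to lie, since it is the sole place the structural hypothesis on $\{\Phi_x\}$ is consumed. Using $\Phi_x(0) = 0$ one rewrites the identity as $\Phi_x\bigl(f(x, u_\infty(x))\bigr) = \Phi_x(0)$, whence injectivity of $\Phi_x$ forces $f(x, u_\infty(x)) = 0$, exactly the desired conclusion. Injectivity alone is what converts the vanishing of $\Phi_x \circ f$ into the vanishing of $f$, but only because $0$ is sent to $0$; this origin-fixing property is automatic in the motivating example, where $\Phi_x = A^r$ is linear. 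I would finally remark that no step in the construction of $u_\infty$ used any property of $\Phi_x$ beyond what is already encoded in the extensions $f_i$, which is what legitimizes the black-box application of Theorem \ref{implicitdeg0}.
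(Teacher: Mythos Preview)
Your approach is essentially identical to the paper's: apply Theorem \ref{implicitdeg0} to $\Phi_x \circ f$ and then use injectivity of each $\Phi_x$ to pass from $\Phi_x(f(x,u_\infty(x)))=0$ to $f(x,u_\infty(x))=0$. You are in fact more careful than the paper in flagging that this last step requires $\Phi_x(0)=0$, a normalization not explicitly stated in the definition of $\Phi$-tameness but tacitly assumed (and automatic in the linear example $\Phi_x=A^r$).
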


\begin{proof}
	We apply Theorem \ref{implicitdeg0} to $\Phi \circ f.$ Then, since $\forall x , \Phi_x$ is injective, $\Phi_x \circ f(x,.) = 0 \Leftrightarrow f(x,.)=0.$  
	\end{proof}
\section{Tentatives for inverse functions and Frobenius theorem}
\subsection{``Local'' inverse theorem}
Let $(E_i)_{i\in \N} $ be an decreasing sequence of Banach spaces, i.e. $\forall i \in \N,$ we have $E_{i+1}\subset E_i$ with continuous inclusion maps and let $E_\infty$ be the projective limit of teh family $(E_i)_{i\in \N} .$ Let  $U_0$ be an open neighborhood of $0$ in $E_0 ,$
 and define for $i \in \N\cup \{\infty\}, $ $U_i = U_0 \cap E_i.$ Let $V_0$  be an open neighborhood of $0$ in $E_0 ,$
 and define for $i \in \N\cup \{\infty\}, $ $V_i = V_0 \cap E_i.$ Let $f_\infty : U_\infty \rightarrow V_\infty$ be a $C^r-$map ($r\geq 1$) such that $f(0)=0$, which extends to $C^r-$maps $f_i:U_i \rightarrow V_i$ and such that $Df_i(0) = Id_{E_i}.$
\begin{Theorem}
	There exists a domain $D \subset U_\infty,$ which contains the open unit ball of a Banach space 
	$B_{f_\infty} \subset E_\infty,$ with norm defined by a sequence $(k_i)$ of positive numbers by $$||.||_{f_\infty} = \sup_{i \in \N } \left\{\frac{||.||_{E_i}}{k_i}\right\}$$
	 such that $f_\infty|_D$ is a bijection $D \subset U_\infty \rightarrow f_\infty(D) \subset V_\infty.$
\end{Theorem}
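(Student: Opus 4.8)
The plan is to reduce this inverse function statement to the implicit function theorem already proved, namely Theorem \ref{implicitdeg0}, by the classical device of encoding the equation $y = f_\infty(x)$ as the vanishing of an auxiliary map. Concretely, I would set $G_i = E_i$ for every $i$ and define, on a suitable neighborhood of $0$ in $E_0 \times E_0$, the map
\begin{equation}
F_i(y,x) = f_i(x) - y,
\end{equation}
where the roles are arranged so that $y$ plays the part of the ``parameter'' variable and $x$ the part of the variable to be solved for. The hypotheses transfer immediately: $F_i(0,0) = f_i(0) = 0$, and since $F_i$ is $C^r$ with $D_2 F_i(0,0) = Df_i(0) = Id_{E_i}$, the map $F = \underleftarrow{\lim} F_i$ satisfies exactly the assumptions of Theorem \ref{implicitdeg0} (with the source Banach spaces being the $E_i$ in both slots).

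Applying that theorem yields a non-empty domain $\tilde D_\infty$, containing the unit ball of a Banach space $B_{F_\infty}$ cut out by a sequence $(k_i)$ of positive constants through the norm $\|\cdot\|_{F_\infty} = \sup_{i\in\N}\{\|\cdot\|_{E_i}/k_i\}$, together with a map $u_\infty : \tilde D_\infty \to V_\infty$ such that $F_\infty(y, u_\infty(y)) = 0$, i.e. $f_\infty(u_\infty(y)) = y$ for every $y$ in $\tilde D_\infty$. Thus $u_\infty$ is a right inverse for $f_\infty$ on the image, and its range $D := u_\infty(\tilde D_\infty) \subset U_\infty$ is the candidate domain on which $f_\infty$ should be a bijection. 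By construction $f_\infty$ restricted to $D$ surjects onto $\tilde D_\infty$, so the content that remains is injectivity of $f_\infty$ on $D$, equivalently that $u_\infty$ is a genuine two-sided inverse rather than merely a section.

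The injectivity I would obtain from the uniqueness clause built into the implicit-function construction: in the proof of Theorem \ref{implicitdeg0} each $D_i$ is a \emph{maximal} domain carrying a \emph{unique} solution $u_i$, and the fixed-point scheme of Lemma \ref{lemma3banach} shows the solution is the unique one landing in the relevant ball. Concretely, if $x, x' \in D$ satisfy $f_\infty(x) = f_\infty(x') = y$, then both $x$ and $x'$ solve $F_\infty(y, \cdot) = 0$; because $Df_i(0) = Id_{E_i}$ forces $f_i$ to be locally injective near $0$ (it is a local $C^r$-diffeomorphism on each Banach level by the classical Banach inverse function theorem), the two solutions must coincide on a small enough ball, and hence on the Banach space $B_{f_\infty}$ whose unit ball is contained in $D$. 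I therefore expect the main obstacle to be precisely this bookkeeping of \emph{where} injectivity holds: the domain $D$ is the image $u_\infty(\tilde D_\infty)$, which is not obviously open and whose ``size'' is controlled only indirectly through the $(k_i)$. One must check that $D$ itself contains the unit ball of a Banach space of the required form --- this should follow by pushing the Banach-ball in the $y$-variable through $u_\infty$ and using the level-wise continuity of each $u_i$ to produce a possibly smaller sequence of constants, but making the constants explicit and uniform across the projective limit is the delicate point, exactly as in the non-openness discussion emphasized in the introduction.
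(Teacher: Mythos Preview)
Your reduction is exactly the paper's: apply Theorem~\ref{implicitdeg0} to $g(y,x)=y-f_\infty(x)$ (your $F$, up to sign), obtain $u_\infty$ on a domain $\tilde D_\infty$ in the $y$-variable, and set $D=u_\infty(\tilde D_\infty)$. Two points, however, deserve correction.

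First, you work harder than necessary for injectivity of $f_\infty|_D$. Once $D:=u_\infty(\tilde D_\infty)$ and $f_\infty\circ u_\infty=\mathrm{id}_{\tilde D_\infty}$, the map $u_\infty:\tilde D_\infty\to D$ is automatically a bijection (surjective by definition of $D$, injective because it has a left inverse), and its inverse is precisely $f_\infty|_D$, since $x=u_\infty(y)$ gives $f_\infty(x)=y$. No level-wise appeal to the Banach inverse function theorem is needed for this part.

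Second, and this is the actual gap, your proposed argument that $D$ contains a Banach unit ball points the wrong way. Continuity of the $u_i$ only tells you that $u_i$ carries small $y$-balls \emph{into} small $x$-sets; it gives no reason for the image $u_\infty(\hbox{ball})$ to \emph{contain} a ball, which is what the statement asks. The paper instead pulls back along the $f_i$: if $(c_i)$ are the constants produced by Theorem~\ref{implicitdeg0} for $\tilde D_\infty$, use continuity of each $f_i$ at $0$ to pick $k_i>0$ with $\|x\|_{E_i}<k_i\Rightarrow\|f_i(x)\|_{E_i}<c_i$. Then $\sup_i \|x\|_{E_i}/k_i<1$ forces $f_\infty(x)\in\tilde D_\infty$, and the \emph{uniqueness} built into the domains $D_i$ of Theorem~\ref{implicitdeg0} (each is maximal for a unique solution of $g_i(y,\cdot)=0$) yields $x=u_\infty(f_\infty(x))\in D$. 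Thus the constants $(k_i)$ in the statement are \emph{not} the ones handed to you directly by Theorem~\ref{implicitdeg0}; they arise after one further shrinking via continuity of $f_i$, and the uniqueness clause, which you invoked only for injectivity, is what actually closes this step.
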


\begin{proof}
	We apply Theorem \ref{implicitdeg0} to $g(x,y) = x - f_\infty(y),$ for $(x,y) \in V_\infty \times U_\infty.$ 
	Indeed, we define a $C^r-$ map 
	$g:V_\infty \times U_\infty \rightarrow E_\infty$ which extends to the maps $$g_i:(x,y) \in V_i \times U_i \mapsto x-f_i(y)\in E_i.$$ We have that $D_2g_i(0;0 ) = Df_i(0) = Id_{E_i},$ so that there exists a domain
	$D_\infty \subset V_\infty,$ and a sequence $(c_i)$ of positive real numbers such that $D_\infty$ contains the unit open ball of the Banach space $B_{g_\infty}\subset E_\infty$ with norm $||.||_{g_\infty} = \sup_{i \in \N } \left\{\frac{||.||_{E_i}}{c_i}\right\}$  and a function $u_\infty : D_\infty \rightarrow U_\infty$ such that $$ \forall x \in D_\infty,\,  x - f_\infty (u_\infty(x))=0.$$
	We set $D = u_\infty(D_\infty).$
	Since each $f_i$ is a $C^0$-map, there exists a sequence $(k_i)$ of positive numbers such that 
	$$||x ||_{E_i} < k_i \Rightarrow ||f_i(x)||_{E_i}<c_i.$$
	By the way, $\forall x \in U_\infty,$ $$\sup_{i \in \N} \left\{ \frac{||x||_{E_i}}{k_i}\right\}<1 \Rightarrow  \sup_{i \in \N} \left\{ \frac{||f_\infty(x)||_{E_i}}{c_i}\right\}<1 \Rightarrow  f_\infty(x) \in D_\infty \Rightarrow x = u_\infty\circ f_\infty(x) \in D.$$
	\end{proof}
\subsection{An obstruction for a  Frobenius theorem}

A setting for an adapted Frobenius theorem would be teh following:
{\it	
Let 
	$$ f_i : O_i \rightarrow L(E_i,F_i), \quad i \in -\N$$ 
	be a collection of smooth maps satisfying the following condition: 
	$$ i > j \Rightarrow f_j|_{O_i} \hbox{ restricts as a linear map to } f_i$$ and such that, $$\forall (x,y) \in O_i, \forall a,b \in E_i$$
	$$(D_1f_i(x,y)(a)(b) + (D_2f_i(x,y))(f_i(x,y)(a))(b) =$$
$$(D_1f_i(x,y)(b)(a) + (D_2f_i(x,y))(f_i(x,y)(b))(a) $$
	(this condition is the analogous of the Frobenius condition in a Banach setting, that we call the ILH Frobenius condition).
	
	Then,
		$\forall (x_0, y_0) \in O_{\infty}$, there exists Fr\"olicher space $ D $ that contains $(x_0, y_0)$ and a smooth map
	$J : D \rightarrow \hbox{\bf F}$
	such that (conditions linked to differentiability of J) 
}

\vskip 12pt
Let us now try to adapt the classical proof in e.g. \cite{Pen}, with the help of theorem \ref{implicitdeg0}.
 We can assume with no restriction that $f(0;0)=0.$
We consider  
$$G_i = C^1_b([0,1],F_i) = \{ \gamma \in C^1([0,1],F_i) | \gamma(0)=0 \}$$ and 
$$ H_i = C^0([0,1],F_i),$$
endowed with their usual topologies. Obviously, if $ i < j$, 
the injections $ G_j \subset G_i$ and $ H_j \subset H_i$ are continuous.

Let us consider $B_0$ an open ball of $E_0$ centered in 
$x_0$,   $B'_0$ an open ball of $F_0$ centered in $y_0$, 
$B''_0$ an open ball of $G_0$ centered in $0$. We set 
$ B_i = B_0 \cap E_i$,  $ B_i' = B_0' \cap F_i$ and 
$ B_i'' = B_0'' \cap G_i$. Then, we define, for $ i \in \N \cup \{\infty\}$,  
$$ g_i : B_i \times B'_i \times B''_i \rightarrow H_i $$
$$ g(x,y,\gamma)(t) = \mathfrak{d \gamma }{ dt}(t) - f_i(t(x-x_0) + x_0, y + \gamma(t)).(x - x_0).$$
We then apply Theorem \ref{implicitdeg0} to $\int_0^{(.)} \circ g_\infty:  B_\infty \times B'_\infty \times B''_\infty \subset (E_\infty \times E_\infty) \times G_\infty \rightarrow G_\infty $. There exists 
a domain $D_\infty$ 
such that we can define the function $\alpha_\infty$ as the unique function such that

$$ \left\{ \begin{array}{l}
\alpha_\infty(x_0) = 0 \\
g_i(x,y,\alpha_\infty(x,y)) = 0 , \quad \forall (x,y) \in D_\infty \\
\end{array}
\right. .$$

Since we set $J(x,y) = y + \alpha_\infty(x,y)(1)$
Uniqueness follows from Theorem \ref{implicitdeg0}.  

\vskip 12pt
\noindent\textbf{Open problem:} We are now facing a theorical impossibility. Classical theory of differentiation is valid for functions on open domains. We need here to consider $D_1J,$ which is here defined on $D_\infty$ which is not a priori open. There exists numerous extensions of the classical theory of differentiation, one of them is used in \cite{Ma2016-3} based on \cite{Igdiff}. Which one is better for this setting? 


	
	
	\section{An application of the implicit functions theorem on $\mathcal{L}_\infty$}
	We consider here a sequence of Banach spaces $(E_i)$ as before, and we asssume also that $\forall i, \, E_{i+1}$ is dense in $E_i.$
	Following \cite{DGV,Om}, we consider the set  of linear maps $E_\infty \rightarrow E_\infty$ which extend to bounded linear maps $E_i \rightarrow E_i.$ Let us note it as $\mathcal{L}_\infty$, and $G\mathcal{L}_\infty = \bigcap_{i \in \N}GL(E_i)$ is a group known as a topological group \cite{DGV}, and \cite{Om} quotes ``natural differentiation rules'' that are identified in \cite{Ma2013,MaW2017} as generating a smooth Lie group for generalized differentiation on Fr\"olicher or diffeological  spaces. Let $i \in \N,$ we define $$ \mathcal{L}_i = \left\{ a \in L(E_i)\,\Big{|}\, a|_{E_\infty}\in \mathcal{L}_\infty\right\}.$$
	We equip these spaces with the norms 
	$$||a||_i = \max\left\{ ||a||_{L(E_{i-r})}\, \Big{|}\, 0 \leq r \leq i \right\}.$$
	We apply Theorem \ref{implicitdeg0} to the map 
	$$f_{\infty}:(a,b) \in \mathcal{L}_\infty^2 \mapsto (Id+a)(Id+b) - Id,$$
	for the sequence of Banach spaces $(\mathcal{L}_i)$ with projective limit $\mathcal{L}_\infty.$
	We already know that the maximal domain $D_\infty$ of the implicit function obtained will be 
	$$D_\infty \supset \left\{ a \in \mathcal{L}_\infty \, \Big{|} \,  Id + a \in G\mathcal{L}_\infty\right\}$$
	and the implicit function will be $$u_\infty: a \in D_\infty \mapsto (Id + a)^{-1} - Id$$ where $(Id + a)^{-1}$ is the left inverse of $Id+a.$
	But the main question about $G\mathcal{L}_\infty$ is the most adequate structure for it: it behaves like a Lie group \cite{Om,Ma2013}, but does not carry a priori charts which allows us only to consider it as a topological group \cite{DGV}. Applying Theorem \ref{implicitdeg0}, there exists a Banach subspace $B$ of $\mathcal{L}_\infty$ defined by the norm
	$$||a|| = \sup_{i \in \N } \left\{\frac{||a||_{_i}}{c_i}\right\} .$$
	But we easily show that each $L_i$ is a Banach algebra, so that, $c_i = 1$ since its group of the units contains te open ball of radius $1$ centrered at $Id.$ By the way, 
	$$ ||a|| =  \sup_{i \in \N } \left\{{||a||_{L(E_i)}}\right\} ,$$
	and $B$ is a Banach algebra, with group of the units $GL(B) \subset G\mathcal{L}_\infty$ which is a Banach Lie group.
	 
	We finish with the special case when $(E_i)$ is a ILH sequence (i.e. a sequence of Hilbert spaces with bounded and dense inclusion, see \cite{Om}) and when there exists a self-adjoint, positive (unbounded) operator $Q$ such that $$(Q^i a,b)_{E_0} = (a,b)_{E_i}.$$ In this case, there exists $(e_k)_{k \in \N}$ an orthonormal base in $E_0$ of eigenvectors of $Q$ in  $E_\infty$ with is also orthogonal in $E_i.$ In this case, the orthogonal projections $$a \mapsto (e_k,a)_{E_0}e_k$$ restrict to operators in $B,$ which shows that $B$ is an infinite dimensional Banach algebra.   
	
	\vskip 12pt
	
	\noindent
	\textbf{Open question:} There is a natural right action of $GL(B)$ on $G\mathcal{L}_\infty$ by composition. What is the structure of $G\mathcal{L}_\infty / GL(B)$?


\begin{thebibliography}{99}
	\bibitem{Dieu} Dieudonn\'e, J.; \textit{Foundations of modern analysis} Academic Press (1960) 
	
	\bibitem{DGV} Dodson, C.T.J.; Galanis, G. Vassiliou, E.; \textit{Geometry in a Fr\'echet Context:
	A Projective Limit Approach} london Math. Soc. Lect . Note Series \textbf{428} (2016)
	\bibitem{Gl2006} Gl\"ockner, H.,
	Implicit  functions  from  topological  vector  spaces  to  Banach
	spaces
	, \textit{Israel J. Math.}
	\textbf{155}
, 205--252	(2006).
	
	
	
	
		\bibitem{Ham}  Hamilton, R.S.; { The inverse function theorem of Nash and Moser}; 
	{\it Bull. Amer. Math Soc. (NS)} \textbf{7} (1984) 65-222 
	
	\bibitem{HN1971} Hogbe-Nlend, H.; \textit{Th\'eorie des bornologies et applications} Lect. Notes in Math. \textbf{273} (1971)
		\bibitem{KP2002} Krantz,  S. G.;    Parks, H.;   \textit{The  Implicit  Function  Theorem,}
	Birkhauser, Boston (2002)
	\bibitem{Igdiff} Iglesias-Zemmour, P.; \textit{Diffeology} Mathematical Surveys and Monographs
	\textbf{185} (2013)
	
	\bibitem{KM} Kriegl, A.; Michor, P.W.; \textit{The convenient setting
		for global analysis} Math. surveys and monographs \textbf{53}, American
	Mathematical society, Providence, USA. (2000)
	
	\bibitem{Ma2013} 
	Magnot, J-P., Ambrose-Singer theorem on diffeological bundles and complete integrability of the KP equation, \textit{Int. J. of Geom. Meth. in Mod. Phys.}, \textbf{10} no9, articleID 1350043, 31 p. (2013) 
	
	\bibitem{Ma2016-3} Magnot, J-P.; Cauchy diffeology, numerical methods and implicit function theorem \texttt{ arXiv:1607.02636}
	
	\bibitem{MaW2017} Magnot, J-P.; Watts, J. The diffeology of Milnor's classifying space \textit{Top. Appl.}
	\textbf{232} 189-213 (2017)
	
	\bibitem{Om} Omori, H.; \textit{Infinite dimensional Lie groups} AMS translations of mathematical monographs \textbf{158} (1997)
	
	
		\bibitem{Pen} Penot, J-P.; Sur le th\'eor\`eme de Frobenius \textit{Bull. SMF} \textbf{98} 47-80 (1970)
\end{thebibliography}
\end{document}